\newtheorem{thm}{Theorem}[section]
\newtheorem{cor}[thm]{Corollary}
\newtheorem{rem}[thm]{Remark}
\newtheorem{lem}[thm]{Lemma}
\numberwithin{equation}{section}
 \newcommand{\al}{\alpha}
 \newcommand{\Ld}{\Lambda}
 \newcommand{\ep}{\varepsilon}
 \newcommand{\Si}{\Sigma}
 \newcommand{\ga}{\gamma}
 \newcommand{\Ga}{\Gamma}
 \newcommand{\sch}{Schr\"odinger }
 \newcommand{\G}{\mathcal{G}}
 \DeclareMathOperator{\tr}{tr}
 \newcommand{\Real}{\mathbb{R}}
 \newcommand{\Complex}{\mathbb{C}}
 \def\<{\left\langle} \def\>{\right\rangle}
 \def\({\left(} \def\){\right)}
 \newcommand{\n}{\nabla}
 \newcommand{\p}{\partial}
 \renewcommand{\t}[1]{\tilde{#1}}
 \renewcommand{\vec}[1]{\textbf{#1}}
 \renewcommand{\H}{\vec{H}}
 \newcommand{\A}{\vec{A}}
 \newcommand{\nt}{\n^\top}
 \newcommand{\nn}{\n^\bot}
  \newcommand{\tnt}{\tn^\top}
 \newcommand{\tnn}{\tn^\bot}
  \newcommand{\pit}{\pi^\top}
 \newcommand{\pin}{\pi^\bot}
  \newcommand{\Gt}{\G^\top}
 \newcommand{\Gn}{\G^\bot}
   \newcommand{\gt}{g^\top}
 \newcommand{\gn}{g^\bot}
 \newcommand{\tn}{\tilde{\nabla}}
  \newcommand{\bg}{\bar{g}}
 \newcommand{\tg}{\tilde{g}}
 \newcommand{\tgt}{\tg^\top}
 \newcommand{\tgn}{\tg^\bot}
 \newcommand{\tJ}{\t{J}}
\begin{document}

\title{Gauss map of the skew mean curvature flow}

\author{Chong Song}

\address{School of Mathematical Sciences, Xiamen University, Xiamen, 361005, P.R.China.}
\email{songchong@xmu.edu.cn}
\subjclass[2010]{53C44, 76B47, 35Q55}



\begin{abstract}
  The skew mean curvature flow(SMCF) is a natural generalization of the famous vortex filament equation. In this note, we show that the Gauss map of the SMCF satisfies a \sch flow equation. In this regard, we explore the geometry of the oriented Grassmannian manifold explicitly by embedding it into the exterior product space.
\end{abstract}

\maketitle

\section{Introduction}

The \emph{skew mean curvature flow}(SMCF) is an evolution equation defined on a codimension two submanifold, along which the submanifold deforms in its binormal direction. The SMCF origins from hydrodynamics and models the locally induced motion of \emph{vortex membranes}, i.e. singular vortices supported on codimension two subsets~\cite{Sh,Kh}. It also appears in the context of superfluidity and superconductivity describing the asymptotic behavior of vortices which is governed by a complex-valued PDE~\cite{J}. In particular, the SMCF is a generalization of the famous \emph{vortex filament equation}(VFE)~\cite{Da} in both higher dimensions and Riemannian geometry. For a detailed introduction, we refer to~\cite{SS} and references therein.

The SMCF can be regarded as the Hamiltonian flow of the volume functional on the infinite dimensional symplectic space of codimension two submanifolds~\cite{HV,SS}. There is another famous Hamiltonian flow, i.e. the \emph{\sch flow}, which corresponds to the energy functional defined on the space of maps from a Riemannian manifold into a symplectic manifold~\cite{DW1}. It is well-known that the evolution equation of the Gauss map of the VFE coincides with the one dimensional \sch flow on the standard sphere. In fact, suppose $\ga:[0,T]\times S^1\to \Real^3$ satisfies the VFE
\[\p_t\ga=\p_s\ga\times\p_s^2\ga=k\mathbf{b},\]
where $s$ is the arc-length parameter, $k$ is the curvature, $\mathbf{b}$ is the binormal vector and $\times$ denotes the cross product in $\Real^3$. Then it is straightforward to verify that the Gauss map $u:=\p_s\ga:[0,T]\times S^1\to S^2$ satisfies the \sch flow
\[\p_t u=u\times \Delta u=J(u)\tau(u),\]
where $J$ is the complex structure on $S^2$ and $\tau(u)$ is the tension field.

In this note, we show that this relationship holds for higher dimensions, namely, the Gauss map of a solution to the SMCF satisfies a \sch flow equation.

More precisely, suppose $\Si$ is an $m$ dimensional oriented manifold and $F:[0,T]\times \Si\to \Real^{m+2}$ is a family of immersions in the $(m+2)$ dimensional Euclidean space. For each $t\in [0,T]$, the normal bundle of the submanifold $F(t, \Si)$ admits an induced complex structure $J(F)$ which simply rotates a vector in the normal plane by $\pi/2$ in the positive direction. Then the SMCF is defined by
\begin{equation}\label{e:SMCF}
         \p_t F=J(F)\textbf{H}(F),
\end{equation}
where $\textbf{H}(F)$ is the mean curvature of the submanifold.

Recall that the Gauss map of a submanifold sends a point to its tangent plane, which lies in the Grassmannian manifold. Thus the Gauss map of the family of immersions $F$ gives rise to a time-dependent map $\rho$ from $\Sigma$ to the (oriented) Grassmannian $G(m,2)$. It is well-known that $G(m,2)$ is a K\"ahler manifold which admits a canonical complex structure $\t{J}$ (see for example \cite{HO}). The tension field of $\rho$ is defined by
\[\tau(\rho):=\tr_{g_t}\tn d\rho,\]
where $\tn$ is the pull-back of the Levi-Civita connection on $G(m,2)$ and the trace is taken with respect to the induced metric $g_t$ on $\Si$. Our main result is the following

\begin{thm}\label{t:Gauss}
The Gauss map $\rho:[0,T]\times \Si\to G(m,2)$ of a solution to the SMCF (\ref{e:SMCF}) satisfies the Schr\"odinger flow
\begin{equation}\label{e:gauss}
\p_t\rho=\t{J}(\rho)\tau(\rho).
\end{equation}
\end{thm}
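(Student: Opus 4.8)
The plan is to realize the oriented Grassmannian $G(m,2)$ concretely inside the exterior power $\Lambda^m\Real^{m+2}$ via the Pl\"ucker embedding, and then to verify (\ref{e:gauss}) as an identity in $\Lambda^m\Real^{m+2}$ at an arbitrary point $(t_0,p)\in[0,T]\times\Si$. An oriented $m$-plane with positively oriented orthonormal basis $\{e_1,\dots,e_m\}$ is sent to $\rho=e_1\wedge\dots\wedge e_m$; the image is precisely the set of unit decomposable $m$-vectors, and the Gauss map of $F$ is $\rho=(\p_1F\wedge\dots\wedge\p_mF)/\sqrt{\det g}$. Writing $\{\nu_1,\nu_2\}$ for the positively oriented orthonormal basis of the orthogonal $2$-plane (so that $\{e_1,\dots,e_m,\nu_1,\nu_2\}$ is positively oriented in $\Real^{m+2}$), I would first verify that $T_\rho G(m,2)$ has orthonormal basis $\{E_{i\al}\}$, where $E_{i\al}$ is obtained from $\rho$ by replacing the factor $e_i$ with $\nu_\al$, and that the orthogonal complement of $T_\rho G(m,2)$ inside $\Lambda^m\Real^{m+2}$ is spanned by $\rho$ together with the $m$-vectors having two factors $e_i,e_j$ replaced by $\nu_1,\nu_2$. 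I work with the metric induced on $G(m,2)$ by this embedding; it differs from the standard K\"ahler metric by at most a constant factor, which changes neither the Levi-Civita connection nor (\ref{e:gauss}). The two structural facts I would extract from this model are: (i) the Levi-Civita connection of $G(m,2)$ is the orthogonal projection onto $T_\rho G(m,2)$ of the flat connection on $\Lambda^m\Real^{m+2}$, so that $\tau(\rho)$ equals the projection onto $T_\rho G(m,2)$ of the Laplacian $\De_\Si\rho$ of $\rho$ viewed as a $\Lambda^m\Real^{m+2}$-valued function; and (ii) the canonical complex structure $\tJ$ (see \cite{HO}) is post-composition with the $\pi/2$-rotation $J$ of the normal $2$-plane, so that $\tJ E_{i1}=E_{i2}$ and $\tJ E_{i2}=-E_{i1}$. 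For $m=1$ this model is just the round $S^2$ with its standard complex structure, so Theorem \ref{t:Gauss} contains the classical correspondence between the VFE and the one-dimensional \sch flow.

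With this model in place, I would fix $(t_0,p)$, choose normal coordinates on $\Si$ at $p$ and a local orthonormal normal frame whose normal connection vanishes at $p$, and write $\H=H^1\nu_1+H^2\nu_2$ with $H^\al=g^{ij}h^\al_{ij}$ the mean curvature components, so that $J(F)\H=-H^2\nu_1+H^1\nu_2$. Differentiating the Gauss map in $t$ along the SMCF and using $\p_t\p_iF=\p_i(J(F)\H)$ together with the Gauss and Weingarten formulas, I would compute $\p_t\rho$ at $p$. Two cancellations occur, each coming from $\H\perp J(F)\H$: the variation of the scalar factor $1/\sqrt{\det g}$ is a multiple of $\rho$ which vanishes since $\tr_g\p_tg=-2\<\H,J(F)\H\>=0$; and the tangential component of $\p_i(J(F)\H)$, once wedged into the $i$-th slot, is again a multiple of $\rho$, with total coefficient equal to $\<\H,J(F)\H\>=0$. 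After projecting onto $T_\rho G(m,2)$, what survives is
\[\p_t\rho(p)=\sum_i\big((\p_iH^1)\,E_{i2}-(\p_iH^2)\,E_{i1}\big).\]

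For the right-hand side I would invoke fact (i): at $p$, $\tau(\rho)$ is the projection onto $T_\rho G(m,2)$ of $\sum_j\p_j^2(\p_1F\wedge\dots\wedge\p_mF)$, the scalar factor again dropping out. Expanding $\sum_j\p_j^2(\cdots)$ with the Gauss and Weingarten formulas and using the geodesic coordinates and parallel normal frame at $p$, I expect every term to be annihilated by the projection --- being either a multiple of $\rho$ or an $m$-vector with two normal factors inserted --- except $\sum_{i,j,\al}(\p_jh^\al_{ji})E_{i\al}$. Here the Codazzi equation for the flat ambient space is decisive: it makes the covariant derivative of the second fundamental form totally symmetric at $p$, whence $\sum_j\p_jh^\al_{ji}=\p_iH^\al$ and $\tau(\rho)(p)=\sum_{i,\al}(\p_iH^\al)E_{i\al}$. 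Applying $\tJ$ via fact (ii), $\tJ(\rho)\tau(\rho)(p)=\sum_i\big((\p_iH^1)E_{i2}-(\p_iH^2)E_{i1}\big)$, which is exactly $\p_t\rho(p)$; since $(t_0,p)$ was arbitrary, (\ref{e:gauss}) follows.

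The step I expect to be the genuine obstacle is the first one: distilling the intrinsic Riemannian and Hermitian geometry of the oriented Grassmannian --- above all the identification of its Levi-Civita connection with the ambient orthogonal projection, and the explicit action of $\tJ$ on the $E_{i\al}$ --- out of the extrinsic exterior-algebra data, and doing so cleanly enough that the two pointwise computations line up term by term. Once that is set up, the conclusion is essentially forced: the only second-order ingredient is the Codazzi equation, and both cancellations in the computation of $\p_t\rho$ reduce to the single algebraic identity $\H\perp J(F)\H$ --- precisely the feature of the binormal direction that makes the SMCF a Hamiltonian, Schr\"odinger-type flow, just as in the one-dimensional VFE case.
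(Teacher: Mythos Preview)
Your proposal is correct and follows essentially the same route as the paper: the Pl\"ucker embedding of $G(m,2)$ into $\Lambda^m\Real^{m+2}$, the tangent basis $E_{i\al}$, the identification of $\tJ$ with the normal-plane rotation $J$, and the Codazzi equation to convert $\tr\n\A$ into $\nn\H$. The only cosmetic difference is packaging: the paper first proves a bundle isomorphism $(TG,\tg,\tn)\cong(\Gt\otimes\Gn,\tgt\otimes\tgn,\tnt\otimes\tnn)$ and then reads off $d\rho=\A$ and $\tau(\rho)=\nn\H$ intrinsically, whereas you compute the same quantities extrinsically by projecting $\p_t\rho$ and $\De_\Si\rho$ onto $T_\rho G$ in normal coordinates.
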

\begin{rem}
The difference between the flow (\ref{e:gauss}) and the usual \sch flow is that the metric $g_t$ on the domain manifold $\Sigma$ are evolving along the SMCF, while for the usual \sch flow, the domain is a fixed Riemannian manifold.
\end{rem}

A classical result of Ruh and Vilms~\cite{RV} shows that the Gauss map of a minimal submanifold is harmonic. Wang~\cite{Wang} proved that the Gauss map of the \emph{mean curvature flow}(MCF) satisfies the harmonic map heat flow. Since the SMCF is the Hamiltonian counterpart of the MCF and the \sch flow is the Hamiltionian counterpart of the harmonic map heat flow, Theorem~\ref{t:Gauss} is parallel and consistent with previous results. It is interesting that Terng and Uhlenbeck~\cite{TU} investigated the \sch map into complex Grassmannians, while Theorem~\ref{t:Gauss} proposes a rather natural \sch flow into the Grassmannian $G(m,2)$.

To provide a self-contained proof to Theorem\ref{t:Gauss}, we first explore the geometry of Grassmannian manifolds in general. By embedding the Grassmannian $G(m,k)$ into the exterior product space $\Ld^m\Real^{m+k}$, we study the product structure of its tangent space and express its Levi-Civita connection explicitly. Moreover, for the codimension two Grassmannian $G(m,2)$, we show the equivalence of its canonical complex structure $\t{J}$ and the complex structure $J$ on the normal bundle of the underlying submanifold. The proof of Theorem~\ref{t:Gauss} is given in Section~\ref{s:proof}. We remark that our argument here also gives an alternative proof of the parallel statements on minimal submanifolds and MCF.

\subsection*{Acknowledgement}

Part of this work was completed while the author was visiting University of British Columbia and University of Washington. The author would like to thank Professor Jingyi Chen and Yu Yuan for their support and encouragement. He would also like to thank Professor Youde Wang and Jun Sun for inspiring discussions. The work is partially supported by the Natural Science Foundation of Fujian Province of China (No. 2014J01023) and the Fundamental Research Funds for the Central Universities.

\section{Grassmannian manifolds and Gauss map}

\subsection{Natural embedding of Grassmannian manifold}

Let $m,k,n=m+k$ be positive integers. Let $\Real^{n}$ be the $n$-dimensional Euclidean space with a fixed orientation. The Grassmannian $G:=G(m,k)$ is by definition the set of $m$-dimensional \emph{oriented} subspaces in $\Real^n$. Given a plane $\xi\in G$, we denote the orthogonal complement $k$-plane of $\xi$ by $\xi^\bot$, which has a natural orientation induced from that of $\xi$.

Let $\Ld^m\Real^n$ be the linear space of $m$-vectors, which is endowed with the standard metric $\bg$ induced from $\Real^n$. The Grassmannian $G$ can be realized as the set of unit simple $m$-vectors, which is an embedded submanifold of $\Ld^m\Real^n$ . In fact, for each $m$-plane $\xi \in G$, we choose arbitrarily an oriented orthonormal basis $\{e_1,\cdots,e_m\}\subset \xi$ and let
\[\sigma(\xi)=e_1\wedge\cdots\wedge e_m\in \Ld^m\Real^n.\]
Since the wedge product of a different orthonormal basis (with the same orientation) is the same, the embedding $\sigma:G\to \Ld^m\Real^n$ is well-defined. Note that actually $\sigma(G)$  lies in the unit sphere in $\Ld^m\Real^n$ and hence is compact.

We will call the induced metric $\tilde{g}:=\sigma^*\bg$ the \emph{canonical metric} on the Grassmannian manifold $G$. The corresponding Levi-Civita connection will be denoted by $\tn$. Given an $m$-plane $\xi\in G$, an oriented orthonormal basis $\{e_1,\cdots, e_m, \nu_1, \cdots, \nu_k\}$ of $\Real^n$ is called \emph{adapted} if $\xi=span\{e_1,\cdots, e_m\}$ and $\xi^\bot=span\{\nu_1, \cdots, \nu_k\}$. In the following, we will not distinguish an $m$-plane $\xi$ from the corresponding $m$-vector $e_1\wedge\cdots\wedge e_m$.

\subsection{The tangent space of $G$}

Suppose $\eta:(-\ep, \ep)\to G$ is a smooth curve with $\eta(0)=\xi \in G$. We can find a time-dependent orthonormal frame $\{e_1(t),\cdots, e_m(t), \nu_1(t), \cdots, \nu_k(t)\}$ which is adapted to the plane $\eta(t)$ at each time $t\in (-\ep, \ep)$. Using the embedding of $G$ in $\Ld^m\Real^n$, we can express the curve $\eta$ by
\begin{equation}\label{e:101}
  \eta(t)=e_1(t)\wedge\cdots\wedge e_m(t).
\end{equation}
Let $\pit$ and $\pin$ be the projections from $\Real^n$ into $\xi$ and $\xi^\bot$, respectively. By differentiating (\ref{e:101}), we get
\[\begin{aligned}
  \frac{d}{dt}\eta\Big|_{t=0}&=\left(\frac{d}{dt}e_1\wedge\cdots\wedge e_m+\cdots +e_1\wedge\cdots\wedge \frac{d}{dt}e_m\right)\Big|_{t=0}\\
  &=\tnn_t e_1\wedge\cdots\wedge e_m\Big|_{t=0}+\cdots +e_1\wedge\cdots\wedge \tnn_t e_m\Big|_{t=0},
\end{aligned}\]
where $\tnn_t:=\pin\circ \frac{d}{dt}$. It follows that the tangent place of $T_{\xi} G$ can be spanned by the orthonormal basis
\begin{equation}\label{e:basis}
  \{E_{i\al}:=e_1\wedge\cdots\wedge\underbrace{\nu_\al}_i\wedge\cdots\wedge e_m;1\le i\le m, 1\le \al\le k\}.
\end{equation}

\subsection{The tautological bundle}

There is a so-called \emph{tautological bundle} $\Gt$ over $G$ (also referred to as the universal or canonical bundle, see for example~\cite{KN}) obtained by simply attaching the $m$-plane $\xi$ itself at each point $\xi\in G$.
The inclusion $\iota_\xi:\xi\to \Real^n$ at each plane $\xi\in \Gt$ naturally induces a metric $\tgt$ on the bundle $\Gt$. We can also define a connection $\tnt:\Ga(TG)\times \Ga(\Gt)\to \Ga(\Gt)$ on $\Gt$ as follows. Let $\xi\in G$ be an $m$-plane and $\pit:\Real^n\to\xi$ be the orthogonal projection. For any tangent vector $X\in T_\xi G$, we can find a curve $\eta:(-\ep,\ep)\to G$ such that $\eta'(0)=X$. Now given a section $u\in \Ga(\Gt)$, we define
\[ \tnt_Xu|_\xi=\pit\circ \frac{d}{dt}\Big|_{t=0}\left(\iota_{\eta(t)}\circ u(\eta(t))\right).\]
Its easy to verify that the connection $\tnt$ is compatible with the metric $\tgt$, i.e. $\tnt\tgt=0$.

There is another bundle $\Gn$ over $G$ obtained by attaching the orthogonal complement $k$-plane $\xi^\bot$ at each $\xi\in G$. Similarly, we have a natural metric $\tgn$ and a compatible connection $\tnn$ on the bundle $\Gn$.

The above two bundles together give us a tensor product bundle $\G:=\Gt\otimes \Gn$ over $G$ whose fiber at $\xi$ is the tensor product $\xi\otimes \xi^\bot$. We can construct a tensor product metric $\tgt\otimes \tgn$ and a tensor product connection $\tnt\otimes \tnn$ on $\G$. Namely, for any $u\otimes \mu, v\otimes \nu\in \Ga(\G)$, we have
\[\begin{aligned}
(\tgt\otimes \tgn)(u\otimes \mu, v\otimes \nu)&=\tgt(u,v)\cdot\tgn(\mu,\nu),\\
(\tnt\otimes \tnn)(u\otimes \mu)&=\tnt u\otimes \mu+u\otimes \tnn \mu.
\end{aligned}\]
Obviously, the connection is compatible with the metric since
\[ (\tnt\otimes \tnn)(\tgt\otimes \tgn)=\tnt\tgt\otimes \tgn+\tgt\otimes \tnn\tgn=0.\]

\subsection{The tensor product structure of $TG$}

Given an adapted frame $\{e_1,\cdots, e_m, \nu_1, \cdots, \nu_k\}$ at $\xi\in G$, we have an orthonormal basis $\{E_{i\al}, 1\le i\le m, 1\le \al\le k\}$ of $T_\xi G$ which is defined by (\ref{e:basis}). Using the duality w.r.t. the $m$-form $e_1^*\wedge\cdots\wedge e_m^*$, we may identify $e_1\wedge\cdots\wedge\widehat{e_i}\wedge\cdots\wedge e_m$ with $e_i^*$ and hence identify $E_{i\al}$ with  $e_i^*\otimes \nu_\al$. This gives a linear map $\phi_\xi: \xi^*\otimes \xi^\bot\to T_\xi G$ at each $\xi\in G$ such that $\phi_\xi(e_i^*\otimes \nu_\al)=E_{i\al}$. It is obvious that $\phi_\xi$ is independent of the choice of the basis and is an isometry. By identifying $\xi$ with its dual space $\xi^*$, we also have an isometry $\psi_\xi: \xi\otimes \xi^\bot\to T_\xi G$ such that $\psi_\xi(e_i\otimes \nu_\al)=E_{i\al}$. Consequently, we can define a bundle map $\Psi:\G\to TG$ where the fiber-wise map at each point $\xi\in G$ is given by $\psi_\xi$.

\begin{thm}\label{t:bundle-equivalence}
  The bundle map $\Psi:(\G, \tgt\otimes \tgn, \tnt\otimes \tnn) \to (TG, \tg, \tn)$ is a bundle isometry which preserves the connection, i.e.
  \begin{equation}\label{e:bundle-equivalence}
    \Psi^*\tg=\tgt\otimes \tgn, \quad \Psi^*\tn=\tnt\otimes \tnn.
  \end{equation}
\end{thm}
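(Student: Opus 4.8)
The plan is to separate the assertion into its metric part and its connection part. The metric part is just a basis count; the connection part will use that, by construction, $\sigma$ realizes $(G,\tg)$ as an isometric submanifold of the \emph{flat} space $(\Ld^m\Real^n,\bg)$. For the metric part, I fix $\xi\in G$ and an adapted frame $\{e_1,\dots,e_m,\nu_1,\dots,\nu_k\}$. Then $\{e_i\otimes\nu_\al\}$ is an orthonormal basis of the fiber $\xi\otimes\xi^\bot$ of $\G$ for $\tgt\otimes\tgn$, and $\{E_{i\al}\}$ is an orthonormal basis of $T_\xi G$ by \eqref{e:basis}. Since $\psi_\xi$ carries the former basis onto the latter and does not depend on the choice of adapted frame, it is a linear isometry; hence $\Psi$ is a smooth bundle isometry and $\Psi^*\tg=\tgt\otimes\tgn$.

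For the connection part, I would invoke the Gauss formula: the Levi-Civita connection $\tn$ of the submanifold $\sigma(G)\subset\Ld^m\Real^n$ is the orthogonal projection $\Pi$ onto $TG$ of the flat connection $D$ of $\Ld^m\Real^n$. Hence it suffices to check, for every $\xi\in G$, every $X\in T_\xi G$ and every section $s=\sum_{i,\al}s^{i\al}\,e_i\otimes\nu_\al$ of $\G$, the identity $\Psi\bigl((\tnt\otimes\tnn)_Xs\bigr)=\Pi\bigl(D_X(\Psi s)\bigr)$. To evaluate the two sides I choose a curve $\eta$ in $G$ with $\eta(0)=\xi$ and $\eta'(0)=X$, together with an adapted frame $\{e_i(t),\nu_\al(t)\}$ along $\eta$; by rotating the $e$-legs among themselves and the $\nu$-legs among themselves (solving a linear ODE for the gauge) I may assume the frame is \emph{synchronous} at $t=0$, i.e.\ $\tnt_te_i|_0=0$ and $\tnn_t\nu_\al|_0=0$. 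Then $(\tnt\otimes\tnn)_Xs=\sum_{i,\al}\dot s^{i\al}(0)\,e_i(0)\otimes\nu_\al(0)$, so applying $\Psi$ the left side is $\sum_{i,\al}\dot s^{i\al}(0)\,E_{i\al}(0)$; and since $(\Psi s)(t)=\sum_{i,\al}s^{i\al}(t)\,E_{i\al}(t)$, the right side is $\sum_{i,\al}\dot s^{i\al}(0)\,E_{i\al}(0)+\sum_{i,\al}s^{i\al}(0)\,\Pi\bigl(\frac{d}{dt}\big|_{t=0}E_{i\al}(t)\bigr)$. Thus the whole theorem reduces to the single identity $\Pi\bigl(\frac{d}{dt}\big|_{t=0}E_{i\al}(t)\bigr)=0$ for all $i,\al$.

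I expect this identity to be the one real obstacle, and the synchronous-frame reduction is exactly what keeps it transparent. At $t=0$ one has $\frac{d}{dt}e_p=\sum_\be b_{p\be}\nu_\be$ and $\frac{d}{dt}\nu_\al=-\sum_j b_{j\al}e_j$ with $b_{p\be}=\bg(\frac{d}{dt}e_p,\nu_\be)$, so differentiating $E_{i\al}(t)=e_1\wedge\cdots\wedge\nu_\al|_i\wedge\cdots\wedge e_m$ leg by leg leaves only two kinds of nonzero $m$-vectors: ones carrying two distinct normal legs $\nu_\be,\nu_\al$ (from differentiating a leg $e_p$ with $p\ne i$), and the multiple $-b_{i\al}\,\sigma(\xi)=-b_{i\al}\,e_1\wedge\cdots\wedge e_m$ (from the $e_i$-component of $\frac{d}{dt}\nu_\al$); every remaining contribution has a repeated leg and vanishes. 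But an $m$-vector with two normal legs is $\bg$-orthogonal to every $E_{k\ga}$, which carries exactly one normal leg, and $\sigma(\xi)$ differs from every $E_{k\ga}$ in one leg and so is $\bg$-orthogonal to it as well; since the $E_{k\ga}$ span $T_\xi G$, both pieces lie in the normal space of $\sigma(G)$ and are annihilated by $\Pi$. This proves the identity, and with it the theorem. The only delicate point throughout is keeping track of which wedge products contain a repeated leg and of the normal-leg count; handled carefully it is entirely routine. I also note that the same computation read without the projection $\Pi$ gives a Ruh--Vilms type formula for $\frac{d}{dt}E_{i\al}$, which should be convenient for the proof of Theorem~\ref{t:Gauss} in Section~\ref{s:proof}.
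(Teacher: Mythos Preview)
Your proof is correct and follows essentially the same approach as the paper: both reduce the connection identity to a pointwise check, use that $\tn$ is the tangential projection of the flat derivative on $\Ld^m\Real^n$, and compute $\frac{d}{dt}E_{i\al}$ leg by leg along a curve through $\xi$. The only difference is cosmetic: you first pass to a synchronous frame so that both sides vanish and the check becomes $\Pi(\dot E_{i\al})=0$, whereas the paper works in an arbitrary adapted frame, keeps the nonzero terms $\tnt_te_j$ and $\tnn_t\nu_\al$ visible, and matches them directly against $(\tnt\otimes\tnn)_X(e_i\otimes\nu_\al)$.
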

\begin{proof}
The statement that the map $\Psi$ is an isometry is obvious since each fiber-wise map $\psi_\xi$ is an isometry. So we only need to prove the second equality in (\ref{e:bundle-equivalence}), and it suffices to prove it at one point. Thus fixing a point $\xi\in G$, we only need to show that for any tangent vector $X\in T_{\xi}G$ and a local adapted frame defined near $\xi$,
\begin{equation}\label{e:102}
  \Psi((\tnt\otimes \tnn)_X (e_i\otimes \nu_\al))=\tn_X (\Psi(e_i\otimes \nu_\al))=\tn_X E_{i\al}.
\end{equation}

To this order, we choose a curve $\eta:(-\ep,\ep)\to G$ such that $\eta(0)=\xi$ and $\eta'(0)=X$. Let $\tilde{\pi}:\Ld^m\Real^n\to T_{\xi} G$ be the projection to the tangent space of $\xi$. Since $G$ is a submanifold of $\Ld^m \Real^n$ and $\tn$ is the induced Levi-Civita connection, by definition we have
\[\begin{aligned}
\tn_X E_{i\al} &= \tilde{\pi}\circ \frac{d}{dt}\Big|_{t=0}(e_1\wedge\cdots\wedge\underbrace{\nu_\al}_{i}\wedge\cdots\wedge e_n)\\
&=e_1\wedge\cdots\wedge\underbrace{\tnn_t\nu_\al}_{i}\wedge\cdots\wedge e_n + \sum_{j\neq i}e_1\wedge\cdots\wedge\underbrace{\nu_\al}_{i}\wedge\cdots\wedge\underbrace{\tnt_t e_j}_j\wedge\cdots\wedge e_n\\
&=e_1\wedge\cdots\wedge\underbrace{\tnn_t\nu_\al}_{i}\wedge\cdots\wedge e_n - \sum_{j\neq i}(\tnt_t e_j, e_i)e_1\wedge\cdots\wedge\underbrace{e_i}_{i}\wedge\cdots\wedge\underbrace{\nu_\al}_{j}\wedge\cdots\wedge e_n.
\end{aligned}\]
In the last identity, we used the fact that $\tnt_t e_j=\sum_{k\neq j}(\tnt_t e_j, e_k)e_k$ and switched its position with $\nu_\al$.
Then under the bundle map, we have
\[\begin{aligned}
    \Psi^{-1}(\tn_X E_{i\al}) &=e_i\otimes \tnn_t\nu_\al - \sum_{j\neq i}(\tnt_t e_j, e_i)e_j\otimes \nu_\al\\
  &=e_i\otimes \tnn_t\nu_\al + \sum_{j\neq i}(e_j, \tnt_t e_i)e_j\otimes \nu_\al\\
  &=e_i\otimes \tnn_t\nu_\al+\tnt_te_i\otimes \nu_\al\\
  &=(\tnt\otimes \tnn)_X(e_i\otimes \nu_\al).
\end{aligned}\]
Therefore (\ref{e:102}) holds true and the proof is finished.
\end{proof}

\subsection{Submanifold and Gauss map}

Suppose $\Si$ is an oriented $m$-dimensional manifold. Let $f:\Sigma\to \Real^n$ be an immersion and $M:=f(\Sigma)$ be the immersed submanifold. The Gauss map $\rho$ is a map from $\Si$ to the Grassmannian $G$ which sends each point $x\in \Si$ to the tangent plane $T_{f(x)}M$.

The restriction of the ambient tangent bundle $T\Real^n$ on $M$ splits into the tangent bundle $TM$ and the normal bundle $NM$. We denote the naturally induced metric on $TM$ by $\gt$ and corresponding connection by $\nt$, and denote the induced metric on $NM$ by $\gn$ and corresponding connection by $\nn$.

Using the immersion map $f$, we have pull-back bundles $f^*TM$ and $f^*NM$ over $\Si$, which are endowed with corresponding pull-back metrics and connections. On the other hand, using the Gauss map $\rho$, we can define pull-back bundles $\rho^*\Gt$ and $\rho^*\Gn$ with pull-back metrics and connections. It is straightforward to see that these bundles are identical, i.e.
\[f^*TM\cong\rho^*\Gt, \quad f^*NM\cong \rho^*\Gn. \]
It follows from Theorem~\ref{t:bundle-equivalence} that there is a bundle isometry between $f^*TM\otimes f^*NM$ and $\rho^*TG$ which preserves the connection. In summary, we have

\begin{cor}\label{c:pull-back-equivalence}
Suppose $f:\Si\to \Real^n$ is an immersion and $\rho:\Si \to G$ is the corresponding Gauss map, then we have following bundle identifications
  \begin{align*}\label{e:pull-back-equivalence}
  &f^*(TM, \gt, \nt)\cong \rho^*(\Gt, \tgt,\tnt),\\
  &f^*(NM, \gn, \nn)\cong \rho^*(\Gn, \tgn,\tnn),\\
  &f^*(TM\otimes NM, \gt\otimes \gn, \nt\otimes\nn)\cong \rho^*(TG, \tg,\tn).
  \end{align*}
\end{cor}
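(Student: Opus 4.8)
The plan is to assemble Corollary~\ref{c:pull-back-equivalence} from three ingredients: the point-by-point fiber identifications of the tautological bundles with the pull-backs of $TM$ and $NM$, the functoriality of metrics and connections under pull-back, and Theorem~\ref{t:bundle-equivalence} applied to the pull-back via $\rho$. First I would fix $x\in\Si$, write $\xi=\rho(x)=T_{f(x)}M\in G$, and observe that by the very definition of the Gauss map the fiber $(\rho^*\Gt)_x=\Gt_\xi=\xi$ is literally the tangent plane $T_{f(x)}M=(f^*TM)_x$, and likewise $(\rho^*\Gn)_x=\xi^\bot=N_{f(x)}M=(f^*NM)_x$. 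Thus $f^*TM$ and $\rho^*\Gt$ are the same vector bundle (same total space, same projection), and similarly for the normal bundles; there is nothing to choose.

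Next I would check that the metrics and connections coincide under this identification. For the metrics this is immediate: $\tgt$ on $\Gt$ was defined fiberwise by restricting the Euclidean metric of $\Real^n$ to $\xi$, which is exactly how $\gt$ on $TM$ is defined; pulling back by $f$ versus $\rho$ gives the same bilinear form on the same fiber. For the connections one compares the two definitions directly. Given $X\in T_x\Si$ and a section $u$ of the common bundle, the pull-back connection $(\rho^*\tnt)_X u$ is computed along a curve $\eta(s)=\rho(c(s))$ for a curve $c$ in $\Si$ with $c'(0)=X$, and the definition of $\tnt$ in the excerpt says we differentiate $\iota_{\eta(s)}\circ u$ in $\Real^n$ and project orthogonally onto $\xi$. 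But this is word for word the definition of the induced connection $\nt$ on $TM$ pulled back by $f$: differentiate the $\Real^n$-valued section along $f\circ c$ and project onto the tangent plane. Hence $f^*\nt=\rho^*\tnt$, and the same argument with $\pin$ in place of $\pit$ gives $f^*\nn=\rho^*\tnn$. This establishes the first two identifications.

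For the third, I would apply Theorem~\ref{t:bundle-equivalence} along $\rho$. Pulling back the bundle isometry $\Psi:(\G,\tgt\otimes\tgn,\tnt\otimes\tnn)\to(TG,\tg,\tn)$ by $\rho$ yields a bundle isometry $\rho^*\Psi:\rho^*\G\to\rho^*TG$ that preserves the connection, since pull-back of an isometry is an isometry and pull-back commutes with taking induced connections and tensor products: $\rho^*(\Gt\otimes\Gn)=\rho^*\Gt\otimes\rho^*\Gn$ with the tensor-product metric and connection, and $\rho^*(\tnt\otimes\tnn)=(\rho^*\tnt)\otimes(\rho^*\tnn)$. Combining with the first two identifications $\rho^*\Gt\cong f^*TM$ and $\rho^*\Gn\cong f^*NM$ (as bundles with metric and connection), we get
\[
f^*(TM\otimes NM,\ \gt\otimes\gn,\ \nt\otimes\nn)\ \cong\ \rho^*(\G,\ \tgt\otimes\tgn,\ \tnt\otimes\tnn)\ \xrightarrow{\ \rho^*\Psi\ }\ \rho^*(TG,\ \tg,\ \tn),
\]
which is the desired isometry preserving the connection. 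The routine points that need a line each are that $\rho^*\Psi$ really is $\rho^*$ of $\Psi$ composed with the canonical identifications (so the metric- and connection-preservation follow from Theorem~\ref{t:bundle-equivalence}) and that the splitting $T\Real^n|_M=TM\oplus NM$ matches the splitting $\xi\oplus\xi^\bot$ fiberwise under $\rho$.

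The main obstacle, such as it is, is purely bookkeeping: making sure the two a priori different constructions of the same object — ``pull back the Grassmannian's tautological connection by $\rho$'' versus ``pull back the submanifold's induced connection by $f$'' — are matched on the nose rather than merely up to isomorphism, so that Theorem~\ref{t:bundle-equivalence} can be invoked verbatim. Once the definitions are unwound along a common test curve this is transparent; there is no analytic or geometric difficulty, only the need to keep the four projections $\pit,\pin,\pi^\top_M,\pi^\bot_M$ and the two pull-backs straight.
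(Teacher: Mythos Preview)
Your proposal is correct and follows essentially the same approach as the paper: identify $f^*TM\cong\rho^*\Gt$ and $f^*NM\cong\rho^*\Gn$ fiberwise (with their metrics and connections), then pull back Theorem~\ref{t:bundle-equivalence} via $\rho$ to obtain the third identification. The paper simply asserts the first two identifications as ``straightforward,'' whereas you have written out the verification of the connections explicitly; this is just added detail, not a different route.
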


In particular, we have the following

\begin{lem}\label{l:identify}
Under the above identifications, the differential $d\rho$, which is a section of $\rho^*TG\otimes T\Si$, is equivalent to the second fundamental form $\A$ of the immersion, which is a section of $f^*NM\otimes T\Si\otimes T\Si$.
\end{lem}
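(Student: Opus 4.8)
The plan is to unwind both sides of the claimed identification in a common adapted frame and check they agree. Fix a point $x\in\Si$ with image $p=f(x)$, and choose a local oriented orthonormal adapted frame $\{e_1,\dots,e_m,\nu_1,\dots,\nu_k\}$ along $M$ near $p$, so that $\{e_i\}$ spans $TM$ and $\{\nu_\al\}$ spans $NM$; pulling back by $f$ gives frames for $f^*TM$ and $f^*NM$, and via the Gauss map $\rho$ and the isometry $\Psi$ of Theorem~\ref{t:bundle-equivalence} these correspond to the frame $\{e_i\}$ of $\rho^*\Gt$, the frame $\{\nu_\al\}$ of $\rho^*\Gn$, and the basis $\{E_{i\al}=\psi_\xi(e_i\otimes\nu_\al)\}$ of $\rho^*TG$, where $\xi=\rho(x)=e_1\wedge\cdots\wedge e_m$.

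Next I would compute $d\rho$ in this frame. Since $\rho(x)=e_1(x)\wedge\cdots\wedge e_m(x)$ under the embedding $\sigma$, for a tangent vector $X\in T_x\Si$ the same differentiation that produced \eqref{e:basis} gives
\[
 d\rho(X)=\sum_{i=1}^m e_1\wedge\cdots\wedge(\nn_X\nu? )\cdots
\]
— more precisely, differentiating the wedge and projecting each factor onto the normal directions, only the normal parts of $\nabla_X e_i$ survive, and $\nabla_X e_i$ has normal component $\A(X,e_i)$ by the definition of the second fundamental form. Writing $\A(X,e_i)=\sum_\al \A^\al_{i}(X)\,\nu_\al$, one gets $d\rho(X)=\sum_{i,\al}\A^\al_i(X)\,E_{i\al}$, i.e. under $\Psi^{-1}$,
\[
 \Psi^{-1}\!\big(d\rho(X)\big)=\sum_{i,\al}\A^\al_i(X)\,e_i\otimes\nu_\al=\sum_i e_i\otimes \A(X,e_i).
\]
This is exactly the expression of the second fundamental form $\A$, viewed as an element of $f^*NM\otimes T^*\Si$, paired against $X$: namely $\A=\sum_i \nu\text{-valued coefficient}$, or intrinsically $\A(\cdot,\cdot)\in\Ga(f^*NM\otimes T^*\Si\otimes T^*\Si)$, and contracting the first $T^*\Si$ slot against $\{e_i\}$ (using the metric to raise indices on $\Si$) reproduces the displayed tensor. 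So the two objects match once $d\rho$ is transported through the isometry $\Psi$.

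The only real content is the first step of that computation: recognizing that $\tnn_t e_i$ appearing in the derivative of the wedge is precisely the normal projection of the ambient derivative of $e_i$, hence equals $\A(X,e_i)$ by definition of the second fundamental form. Everything after that is bookkeeping with the identifications of Corollary~\ref{c:pull-back-equivalence}, which already guarantee the frames and connections line up; in particular no new connection computation is needed because Lemma~\ref{l:identify} is a pointwise (zeroth-order) statement about the tensors $d\rho$ and $\A$, not about their derivatives. I expect the main obstacle to be purely notational: carefully tracking which copy of $T^*\Si$ in $\A\in\Ga(f^*NM\otimes T^*\Si\otimes T^*\Si)$ corresponds to the $T^*\Si$ factor in $d\rho\in\Ga(\rho^*TG\otimes T^*\Si)$ and which gets contracted with the tangent frame $\{e_i\}$, together with the symmetry $\A(X,Y)=\A(Y,X)$ which makes this choice immaterial. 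Thus the proof amounts to: fix an adapted frame, differentiate $\rho=e_1\wedge\cdots\wedge e_m$, identify normal components with $\A$, and invoke $\Psi$.
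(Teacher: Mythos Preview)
Your proposal is correct and follows essentially the same approach as the paper: pick a local adapted frame, write $\rho=e_1\wedge\cdots\wedge e_m$, differentiate the wedge, observe that only the normal components $\nn_X e_i=\A(X,e_i)$ survive, and then transport through the isometry $\Psi$ to match the second fundamental form. The paper's version is slightly terser (it writes $d\rho=(\nn_i e_j,\nu_\al)E_{j\al}\otimes\ep_i^*$ directly and then identifies $E_{j\al}\cong e_j\otimes\nu_\al$), but the content is identical; your remark on the symmetry of $\A$ resolving the bookkeeping of the two $T^*\Si$ slots is a nice clarification the paper leaves implicit.
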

\begin{proof}
Choose an open neighborhood in $\Si$. Let $\{e_1, \cdots, e_m, \nu_1, \nu_2\}$ be a local adapted frame on the pull-back bundle $f^*(T\Real^n|_M)$. Let $\ep_i$ be the corresponding tangent vector on $\Si$ such that $f_*\ep_i=e_i$. By definition, the Gauss map is given by $\rho(x)=e_1(x)\wedge \cdots\wedge e_m(x)$ and its differential is
\[\begin{aligned}
  d\rho&=\p_{i}\rho\otimes \ep_i^*\\
  &=(e_1\wedge\cdots\wedge\nn_ie_j\wedge\cdots\wedge e_m)\otimes \ep_i^*\\
  &=(\nn_ie_j, \nu_\al)E_{j\al}\otimes \ep_i^*.
\end{aligned}\]
Using the bundle isomorphism in Corollary~\ref{c:pull-back-equivalence}, we can identify $E_{j\al}$ with $e_j\otimes \nu_\al$ and hence the above term as
\[ (\n_ie_j, \nu_\al)\nu_\al\otimes\ep_j^*\otimes \ep_i^*, \]
which is exactly the second fundamental form $\A$.
\end{proof}

\section{Gauss Map of SMCF}

\subsection{Complex structure on normal bundle}

In the following, we set $k=2$ and $n=m+2$. Suppose $\Si$ is an oriented $m$-dimensional manifold. Let $\Real^n$ be the Euclidean space which is endowed with an orientation and let $f:\Si\to\Real^n$ be a codimension two immersion. Denote the inner product of $\Real^n$ by $\<\cdot,\cdot\>$ and its standard connection by $D$.  We can define a natural complex structure $J$ on the normal bundle of the submanifold $M:=f(\Si)$ by simply rotating a vector in the normal plane by $\pi/2$ in the positive direction. More precisely, for any point $y\in M$ and normal vector $\nu\in N_yM$, we require $\<J\nu, \nu\>=0$, $|J\nu|=|\nu|$ and $f(\ep_1)\wedge\cdots\wedge f(\ep_m)\wedge \nu\wedge J\nu$ to be consistent with the chosen orientation of the ambient space $\Real^n$, where $\{\ep_1,\cdots, \ep_m\}$ is an oriented basis of $\Si$. The following simple fact about the complex structure actually holds for any Riemannian ambient space, see~\cite{SS}.

\begin{lem}\label{l:parallel}
The complex structure is parallel w.r.t. the normal connection, i.e. $\nn J = 0$.
\end{lem}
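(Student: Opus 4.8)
The plan is to work in a local adapted frame and verify the identity $\nn_X(J\nu) = J(\nn_X \nu)$ for an arbitrary normal section $\nu$ and tangent direction $X$ on $\Si$. Since the normal bundle has rank two, it suffices to pick a local orthonormal frame $\{\nu_1,\nu_2\}$ of $NM$ chosen so that $f(\ep_1)\wedge\cdots\wedge f(\ep_m)\wedge\nu_1\wedge\nu_2$ agrees with the orientation of $\Real^n$; then by definition $J\nu_1 = \nu_2$ and $J\nu_2 = -\nu_1$. The complex structure $J$ is determined pointwise by these relations, so the content of the lemma is exactly that the normal connection sends this positively oriented orthonormal frame to another such frame up to rotation — equivalently, that the connection $1$-form of $\nn$ in the frame $\{\nu_1,\nu_2\}$ is skew-symmetric, which is automatic since $\nn$ is metric-compatible (Lemma 2.8's analogue / the general fact that a metric connection on a rank-two bundle has $\so(2)$-valued connection form).

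Concretely, first I would write $\nn_X \nu_1 = a\,\nu_1 + b\,\nu_2$ and $\nn_X \nu_2 = c\,\nu_1 + d\,\nu_2$. Differentiating $|\nu_i|^2 = 1$ gives $a = d = 0$, and differentiating $\<\nu_1,\nu_2\> = 0$ gives $b + c = 0$, so $\nn_X\nu_1 = b\,\nu_2$ and $\nn_X\nu_2 = -b\,\nu_1$ for some function $b$. Then I compute directly:
\[
\nn_X(J\nu_1) = \nn_X \nu_2 = -b\,\nu_1 = b\,J\nu_2 = J(b\,\nu_2) = J(\nn_X\nu_1),
\]
and similarly $\nn_X(J\nu_2) = \nn_X(-\nu_1) = -b\,\nu_2 = J(-b\,\nu_1) = J(\nn_X\nu_2)$. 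Since $\{\nu_1,\nu_2\}$ spans $NM$ and both sides are tensorial in $\nu$, this yields $\nn_X(J\nu) = J(\nn_X\nu)$ for all $\nu$, i.e. $\nn J = 0$.

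There is essentially no hard part here; the only point requiring a word of care is the consistency of the pointwise definition of $J$ with the frame description — namely that rotating by $\pi/2$ in the positive direction is exactly the linear map fixed by $J\nu_1 = \nu_2$, $J\nu_2 = -\nu_1$ for any positively oriented orthonormal normal frame. Once that identification is in place, the computation above is immediate, and the argument is manifestly independent of the choice of adapted frame (a change of oriented orthonormal normal frame is a rotation, which commutes with $J$). I would also remark, as the paper does, that nothing in the argument uses flatness of the ambient space, so the statement holds with $\Real^n$ replaced by any oriented Riemannian manifold.
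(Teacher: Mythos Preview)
Your proof is correct and follows essentially the same approach as the paper: both arguments pick an oriented orthonormal normal frame, use metric compatibility to see that the normal connection acts by an $\so(2)$-rotation, and then verify $\nn_X(J\nu)=J(\nn_X\nu)$ on the frame. The only cosmetic difference is that the paper writes $\nn$ explicitly as the normal projection of the ambient derivative $D$ and computes the coefficients from $\<D_u\mu,\nu\>$, whereas you invoke metric compatibility of $\nn$ directly to obtain the skew-symmetric connection form; these are the same computation.
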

\begin{proof}
It suffices to show that for any locally supported unit normal vector field $\mu\in\Ga(NM)$ in the normal bundle, we have
\begin{equation*}
    J\nn \mu=\nn J\mu.
\end{equation*}
Set $\nu=J\mu$, then $\mu=-J\nu$ and $\{\mu, \nu\}$ forms a local orthonormal frame of the normal bundle. Then for any tangent vector field $u$, we have
\begin{equation*}
\begin{aligned}
J \nn_u \mu   &=  J(D_u\mu)^{\bot}\\
    &=J\left(\<D_u\mu,\mu\>\mu
    +\<D_u\mu,\nu\>\nu\right)\\
   &= -\<D_u\mu,\nu\>\mu,
\end{aligned}
\end{equation*}
while
\begin{equation*}
\begin{aligned}
 \nn_u(J\mu)&=\nn_u \nu=(D_u\nu)^{\bot}\\
    &=\<D_u\nu,\mu\>\mu
    +\<D_u\nu,\nu\>\nu\\
    &= -\<D_u\mu,\nu\>\mu.
\end{aligned}
\end{equation*}
This proves the lemma.
\end{proof}

\subsection{K\"ahler structure on $G(m,2)$}

It is well-known that the codimension two Grassmannian $G:=G(m,2)$, which is dual to $G(2,m)$, is a K\"ahler manifold. Actually, it can be modelled as a complex variety determined by a quadric in $\Complex P^{n-1}$(cf. \cite{HO}). Here we show that the K\"ahler structure of $G$ is naturally induced by the complex structure on the normal bundle.

Recall that we have a natural vector bundle $\Gn$ over $G$ whose fiber at each point $\xi\in G$ is the orthogonal complement sub-space $\xi^\bot$. There is a natural complex structure $J$ on $\Gn$ which simply rotates a vector in $\xi^\bot$ by $\pi/2$ positively (w.r.t. the orientation of $\xi^\bot$). By a similar argument as in the proof of Lemma~\ref{l:parallel}, one can verify that $J$ is compatible with the canonical connection $\tnn$ on $\Gn$, i.e. $\tnn J=0$. In fact, the complex structure on the normal bundle of a submanifold is just the pull-back of the complex structure on $\Gn$ by its Gauss map.

On the other hand, we have the identification $TG\cong \Gt\otimes \Gn$ by Theorem~\ref{t:bundle-equivalence}. Then with the help of an adapted frame, we can define a complex structure $\t{J}:=Id\otimes J$ on $TG$ by
\begin{equation}\label{e:J-equivalence}
 \t{J}E_{i\al}=\t{J}(e_i\otimes \nu_\al)=e_i\otimes (J\nu_\al).
\end{equation}
Obviously, $\t{J}$ is compatible with the Levi-Civita connection on $G$ since $\tn\t{J}=\tnn J=0$. Therefore, $(G,\tg,\t{J})$ is a K\"ahler manifold.

\subsection{Proof of Theorem~\ref{t:Gauss}}\label{s:proof}

Now we are ready to prove our main result on the Gauss map of the SMCF. Recall that $F:\Si\times[0,T)\to\Real^n$ is a family of codimension two immersions satisfying the SMCF (\ref{e:SMCF}). For simplicity, we denote the mean curvature by $\H:=\H(F)$ and the induced complex structure of the normal bundle by $J:=J(F)$. Then the SMCF is simply
\begin{equation}\label{e:SMCF1}
\p_t F=J\H.
\end{equation}
The Gauss map $\rho$ of $F$ is a time-dependent map from $\Si$ to the Grassmannian $G:=G(m,2)$ which is a K\"ahler manifold. Denote the complex structure of $G$ by $\tJ$ and its Levi-Civita connection by $\tn$. Our goal is to show that $\rho$ satisfies the \sch flow
\begin{equation}\label{e:05}
\p_t\rho = \tJ\tau(\rho).
\end{equation}

\begin{proof}[Proof of Theorem~\ref{t:Gauss}]
We only need to prove the identity (\ref{e:05}) at one point. Let $\{e_1, \cdots, e_m, \nu_1, \nu_2\}$ be a local adapted frame on the pull-back bundle $F^*(T\Real^n|_M)$. Let $\ep_i$ be the corresponding tangent vector on $\Si$ such that $F_*(\ep_i)=e_i$. Then
\[F_*[\p_t,\ep_i]=[\p_tF,e_i]=\p_t e_i-\p_i\p_tF.\]
Note that $F_*[\p_t,\ep_i]=F_*(\p_t\ep_i)$ is tangent to $M$. It follows
\[ (\p_t e_i-\p_i\p_tF)^\bot=0.\]
Hence
\[ \nn_i\p_t F = \nn_t e_i.\]
Using SMCF equation (\ref{e:SMCF}) and Lemma~\ref{l:parallel}, we have
\[ \nn_i\p_t F = \nn_i (J\H)= J\nn_i \H.\]
Thus we arrive at
\begin{equation}\label{e:04}
\nn_t e_i =J\nn_i \H.
\end{equation}
Now differentiating the Gauss map $\rho=e_1\wedge\cdots\wedge e_m$ and using (\ref{e:04}), we get
\[\begin{aligned}
    \p_t\rho &= (\nn_t e_1)\wedge\cdots\wedge e_m + \cdots + e_1\wedge\cdots\wedge (\nn_t e_m)\\
    &=(J\nn_i\H)\wedge\cdots\wedge e_m + \cdots + e_1\wedge\cdots\wedge (J\nn_i\H)\\
    &=(J\nn_i\H)\otimes e_i^*,
  \end{aligned}\]
where in the last identity we replace $e_1\wedge\cdots\wedge\widehat{e_i}\wedge\cdots\wedge e_m$ by $e_i^*$. Since the complex structure $J$ on the normal bundle $NM$ and the complex structure $\tJ$ on $G$ is equivalent by (\ref{e:J-equivalence}), we obtain
\begin{equation}\label{e:01}
\p_t\rho =\tJ(\nn_i\H\otimes e_i^*)=\tJ\nn \H.
\end{equation}

On the other hand, we can identify $dp$ with the second fundamental form $\A$ by Lemma~\ref{l:identify}. Moreover, the pull-back connection $\tn$ also coincides with the induced connection $\n$ by Corollary~\ref{c:pull-back-equivalence}. It follows that
\begin{equation}\label{e:02}
  \tau(\rho)=\tr \tn d\rho=\tr \n \A.
\end{equation}

Finally, recall that by the Codazzi equation, we have
\begin{equation}\label{e:03}
  \tr \n \A= \nn(\tr \A)=\nn \H.
\end{equation}
Combining (\ref{e:01}), (\ref{e:02}) and (\ref{e:03}), we conclude that $\rho$ satisfies (\ref{e:05}).

\end{proof}


\end{document}